\newtheorem{thm}{Theorem}[section]
\newtheorem{cor}[thm]{Corollary}
\theoremstyle{definition}
\newtheorem{defn}[thm]{Definition}
\theoremstyle{remark}
\newtheorem{rem}[thm]{Remark}
\newcommand{\R}{\mathbb R}
\newcommand{\ZZ}{{\bar Z}}
\newcommand{\C}{\mathbb C}
\DeclareMathOperator{\supp}{supp}
\DeclareMathOperator{\Imm}{Im}
\DeclareMathOperator{\Rre}{Re}
\newcommand{\p}{\partial}
\newcommand{\les}{\lesssim}
\newcommand{\z}{\bar z}
\newcommand{\w}{\bar w}
\newcommand{\dbarb}{\bar\partial_b}
\newcommand{\lam}{\lambda}
\newcommand{\opF}{\mathcal F}
\newcommand{\mvs}{\varsigma}
\newcommand{\pure}[1]{\ensuremath{\frac{\partial}{\partial #1}}}
\begin{document}

\title{The Szeg\"o kernel on a class of noncompact CR manifolds of high codimension}

\author{Andrew Raich and Michael Tinker}

\thanks{This work was partially supported by a grant from the Simons Foundation (\#280164 to Andrew Raich).}
\thanks{The $n=2$ case in this paper was part of Tinker's Ph.D. thesis which he completed under Raich's supervision.}

\address{Department of Mathematical Sciences, SCEN 301, 1 University of Arkansas, Fayetteville, AR 72701}
\email{araich@uark.edu}

\address{Plano, TX}
\email{michael.tinker@ca.com}

\subjclass[2010]{32A25,32V20, 32W10, 42B37}

\keywords{Szeg\"o kernel, high codimension, control metric, polynomial model}

\begin{abstract}We generalize Nagel's formula for the Szeg\"o kernel and use it to compute the Szeg\"o kernel on a class of noncompact CR manifolds
whose tangent space decomposes into one complex direction and several totally real directions. We also discuss the control metric on these
manifolds and relate it to the size of the Szeg\"o kernel.
\end{abstract}

\maketitle

\section{Introduction}
\label{sec:introduction}
The goal of this note is to derive a formula for the Szeg\"o kernel for a class of polynomial models that are CR manifolds whose maximal complex tangent space is one (complex)
dimensional and totally real tangent space is $n$ (real) dimensional. We also discuss the size of the Szeg\"o kernel in relation to the control metric.

When a CR manifold $M$ has a one (complex) dimensional maximal complex tangent space, then it is standard practice to identify $\dbarb$ with a vector field $\ZZ$ that is antiholomorphic and tangential.
The Szeg\"o kernel is then the orthogonal projection $S_M$ of $L^2(M)$ onto $L^2(M)\cap \ker \ZZ$. In complex analysis, the Szeg\"o kernel is a fundamental object of study, yet very little is known about the 
Szeg\"o kernel when the tangent space to $M$ has at least two totally real directions. In the case that $M$ is a quadric submanifold (with no hypothesis on the dimensionality of the maximal
complex tangent space), then researchers have computed the partial Fourier transform of the
$\Box_b$-heat kernel, from which the partial Fourier transform of the Szeg\"o kernel can be obtained \cite{BoRa11, CaChMa09, CaChFu11}. This article represents the first time that a formula/estimate
for the Szeg\"o kernel has been obtained for any example outside of quadrics when $M$ is not of hypersurface type.

In an interesting twist, we will see in Section \ref{sec:control geometry} that the control metric on $S_M$ is finite on the manifold where the Szeg\"o kernel in nonzero. This behavior may provide a clue
as to the behavior of the Szeg\"o kernel in higher codimensions when for every point, the span of the antiholomorphic vector fields is a strictly smaller dimension than the the dimension of the tangent space.

Let $p_1,\dots,p_n:\R\to\R$ be a collection of $n$ functions and $P = (p_1,\dots, p_n)$. The functions $p_j$ will typically be convex polynomials.
Our model $M_P$ will be a subset of $\C\times\C^n$, and we denote coordinates on
$\C\times\C^n$ by $(z,w)$ where $z = x+iy\in\C$ and $w = t+is \in \C^n$. Set
\[
\frac{\p}{\p\w} = \bigg( \frac{\p}{\p\w_1},\dots, \frac{\p}{\p\w_n}\bigg).
\]
Define
\begin{equation}\label{eqn:M defn}
M_P = \big\{ (z,w)\in \C\times\C^n : \Imm w = P(x) \}.
\end{equation}
The maximal complex tangent space is spanned by the vector
\[
\ZZ = \frac{\p}{\p\z} - iP'(x) \cdot \frac{\p}{\p\w}.
\]
Since the maximal complex tangent space has one dimension, the Szeg\"o kernel on $M$ is the orthogonal projection $S : L^2(M) \to L^2(M)\cap\ker \ZZ$.

We may identify $M$ with $\C\times\R^n$ under the identification
\[
\big(z,t+iP(x)\big) \longleftrightarrow (z,t).
\]
Under this identification, the vector field $2\ZZ$ pushes forward to the vector field
\[
\bar L = \frac{\p}{\p x} + i\Big( \frac{\p}{\p y} - P'(x)\cdot \frac{\p}{\p t}\Big).
\]
We have a choice of measure to put on 
$M$ (and consequently on $\C\times\R^n$). If $n=1$ and $P(x) = x^2$, then $M$ is the Heisenberg group $\mathbb H^1$ and Haar measure on $M$ corresponds to Lebesgue measure on $\C\times\R$.
Following precedent \cite{Nag86,Chr91,Rai06f,Rai06h,Rai07,Rai12,BoRa13h,BoRa11,BoRa09,NaSt06,NaRoStWa89,Has94,Str09h}, we use Lebesgue measure on
$\C\times\R^n$.

We can then identify the Szeg\"o projection $S_P$ on $L^2(M)$ with a projection that (by an abuse of notation) we also call the Szeg\"o projection and denote by $S_P$;
namely,  
the orthogonal projection of $L^2(\C\times\R^n)$ onto $L^2(\C\times\R^n)\cap \ker\bar L$. By standard Hilbert space theory, this Szeg\"o projection $S_P$ is given by
integration against a kernel $S_P\big((x,y,t),(x',y',t')\big)$, that is,
\[
S_Pf(x,y,t) = \int_{\C\times\R^n} S_P\big((x,y,t),(x',y',t')\big) f(x',y',t')\, dx'\, dy'\, dt'.
\]
The first goal of this paper is to find a tractable expression for $S_{P}\big((x,y,t),(x',y',t')\big)$.
\begin{thm}\label{thm:expression for S}
Let $M_P$ be a polynomial model defined by \eqref{eqn:M defn}. Then the Szeg\"o kernel for $M_P$ is given by the formula
\[
S_P\big((x,y,t),(x',y',t')\big)
= \int_{\Sigma_P} \frac{1}{C_{\eta,\tau}} e^{2\pi\eta((x+x')+i(y-y'))} e^{-2\pi\tau\cdot(P(x)+P(x')-i(t-t'))}\, d\eta\,d\tau
\]
where
\[
C_{\eta,\tau} = \int_\R e^{4\pi(x'\eta - P(x')\cdot\tau)}\, dx'
\]
and $\Sigma_P = \{(\eta,\tau)\in\R\times\R^n : C_{\eta,\tau}<\infty\}$.
\end{thm}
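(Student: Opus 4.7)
The plan is to exploit the fact that $\bar L$ is translation invariant in the $(y,t)$ variables and diagonalize the orthogonal projection fiber by fiber via the partial Fourier transform
\[
\hat f(x,\eta,\tau) = \iint_{\R\times\R^n} e^{-2\pi i(y\eta + t\cdot\tau)} f(x,y,t)\, dy\, dt.
\]
Under this transform (which is an $L^2$-isometry in the $(y,t)$ variables by Plancherel), $\bar L$ becomes the $x$-dependent family of first order ordinary differential operators
\[
\bar L_{\eta,\tau} = \frac{\p}{\p x} - 2\pi\bigl(\eta - P'(x)\cdot\tau\bigr),
\]
so that $f \in L^2\cap\ker\bar L$ if and only if $\hat f(\cdot,\eta,\tau) \in \ker \bar L_{\eta,\tau}\subset L^2(\R_x)$ for almost every $(\eta,\tau)$.

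The next step is to solve this ODE explicitly. Every distributional solution of $\bar L_{\eta,\tau} u = 0$ is a scalar multiple of the real-valued function $\phi_{\eta,\tau}(x) = e^{2\pi(x\eta - P(x)\cdot\tau)}$, which lies in $L^2(\R_x)$ precisely when the normalizing constant $C_{\eta,\tau} = \|\phi_{\eta,\tau}\|_{L^2}^2$ is finite, i.e., when $(\eta,\tau)\in\Sigma_P$. Hence the fiberwise kernel of $\bar L$ is one-dimensional over $\Sigma_P$ and zero elsewhere, and $S_P$ acts on each fiber as the rank-one orthogonal projection onto $\mathrm{span}(\phi_{\eta,\tau})$, with integral kernel
\[
k_{\eta,\tau}(x,x') = \frac{1}{C_{\eta,\tau}}\, \phi_{\eta,\tau}(x)\,\phi_{\eta,\tau}(x') = \frac{1}{C_{\eta,\tau}}\, e^{2\pi\bigl((x+x')\eta - (P(x)+P(x'))\cdot\tau\bigr)}.
\]

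To recover the physical-space kernel I would use that $S_P$ commutes with translations in $(y,t)$, so its kernel depends on those variables only through $y-y'$ and $t-t'$. Inverting the partial Fourier transform multiplies $k_{\eta,\tau}(x,x')$ by $e^{2\pi i((y-y')\eta + (t-t')\cdot\tau)}$ and integrates over $\Sigma_P$; regrouping the real and imaginary parts of the exponent yields exactly the expression stated in Theorem \ref{thm:expression for S}. The conceptual content of the proof is entirely in this direct integral decomposition, so the main obstacle is technical: the $(\eta,\tau)$-integral is typically not absolutely convergent, and the Fubini interchange used to pass between $\widehat{S_P f}$ and the physical-space kernel must be interpreted as an oscillatory integral or in a tempered-distribution sense, or justified on a dense subclass of $L^2$ (for example, Schwartz functions whose $(y,t)$-frequencies are compactly supported in the interior of $\Sigma_P$) on which every intermediate integral converges absolutely; one also needs mild measurability of $(\eta,\tau)\mapsto 1/C_{\eta,\tau}$ on $\Sigma_P$ to make the fiberwise projection a literal direct integral of operators, which is routine when the $p_j$ are convex polynomials.
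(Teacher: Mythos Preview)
Your proposal is correct and follows essentially the same route as the paper: a partial Fourier transform in $(y,t)$ reduces $\bar L$ to a one-parameter family of first-order ODEs in $x$, whose $L^2$ kernel is the line spanned by $\phi_{\eta,\tau}=e^{2\pi(x\eta-P(x)\cdot\tau)}$ exactly when $(\eta,\tau)\in\Sigma_P$, and conjugating the rank-one projection back gives the formula. The only cosmetic difference is that the paper factors $\bar L_{\eta,\tau}$ as $\Psi^{-1}\frac{d}{dx}\Psi$ with $\Psi=\phi_{\eta,\tau}^{-1}$ and projects onto constants in the weighted space $L^2(\R,\phi_{\eta,\tau}^2\,dx)$, which is the same computation in a conjugated frame; the paper also does not dwell on the distributional or oscillatory-integral caveats you raise.
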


Theorem \ref{thm:expression for S} generalizes the Szeg\"o kernel formula of Nagel \cite[p.302]{Nag86}. In \cite{Nag86}, Nagel investigates the case
$M_p = \{(z,w) \in \C^2: \Imm w = p(w)\}$ where $p$ is a convex polynomial. If $C^1_{\eta,r} = \int_\R e^{4\pi(\eta x - rp(x))}\, dx$ and
$\Sigma_p = \{(\eta,r) : C^1_{\eta,r}<\infty\}$, then Nagel proves that
\begin{equation}\label{eqn:1-d Szego}
S_p\big((x,y,t),(x',y',t')\big) = \int_{\Sigma_p} \frac{1}{C^1_{\eta,\tau}} e^{2\pi\eta((x+x')+i(y-y'))} e^{-2\pi\tau\cdot(p(x)+p(x')-i(t-t'))}\, d\eta\,d\tau.
\end{equation}


We now explore several consequences of Theorem \ref{thm:expression for S}. 
\begin{thm}\label{thm:P = ap}
Let $M_P$ be a model defined by \eqref{eqn:M defn}, and assume that $p_j(x) = a_jp(x)$ for $1 \leq j \leq n$ where $a_n=1$ and $p(x)$ is a smooth function
satisfying $\lim_{|x|\to\infty}\frac{p(x)}{|x|} = \infty$. If we let $t = (s,t_n)$ and $a=(b,1)$, then
\[
S_P\big((x,y,t),(x',y',t')\big) = \delta_0[(s-s') - b(t_n-t_n')] S_p\big((x,y,t_n),(x',y',t_n')\big)
\]
where $\delta_0$ is the Dirac-$\delta$ in $\R^{n-1}$.
\end{thm}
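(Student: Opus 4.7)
The strategy is to start from the formula of Theorem \ref{thm:expression for S} and push the hypothesis $p_j = a_j p$ through it, reducing the $(n+1)$-dimensional $(\eta,\tau)$-integral to a product of Nagel's one-dimensional Szeg\"o kernel \eqref{eqn:1-d Szego} and a Fourier transform yielding the Dirac mass. The key algebraic observation is that $\tau \cdot P(x) = (\tau\cdot a)\,p(x)$, so with $r := \tau\cdot a$ we have $C_{\eta,\tau} = C^1_{\eta,r}$; in particular, $C_{\eta,\tau}$ depends on $\tau$ only through the scalar $r$, and the admissible region $\Sigma_P$ reduces to a condition on $(\eta,r)$.

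With the notation $\tau=(\tau',\tau_n)$ and $a=(b,1)$, I would change variables from $\tau_n$ to $r = \tau_n + \tau'\cdot b$ at fixed $\tau'$; the Jacobian is $1$ and $\Sigma_P$ becomes $\Sigma_p \times \R^{n-1}$ in the $(\eta,r,\tau')$ coordinates. The exponent rearranges as
\[
\tau\cdot(t-t') = r\,(t_n - t_n') + \tau'\cdot\bigl[(s-s') - b(t_n-t_n')\bigr],
\]
which is the crux of the factorization. The integrand then separates: the $(\eta,r)$-integral over $\Sigma_p$ matches Nagel's formula \eqref{eqn:1-d Szego} and equals $S_p\big((x,y,t_n),(x',y',t_n')\big)$, while the remaining $\tau'$-integral is $\int_{\R^{n-1}} e^{2\pi i\tau'\cdot u}\,d\tau' = \delta_0(u)$ with $u = (s-s') - b(t_n-t_n')$. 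Multiplying these factors yields the claimed identity.

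The main obstacle is that none of these integrals converge absolutely; everything must be interpreted in the sense of distributions. I would handle this by pairing both sides against a Schwartz test function in $(s,t_n)$ (or equivalently by regularizing the $\tau'$-integral with a Gaussian $e^{-\epsilon|\tau'|^2}$, applying Fubini for each $\epsilon>0$, and passing to the limit $\epsilon\to 0^+$). The Fubini step is the only nontrivial point once the distributional interpretation is fixed; the algebraic separation of the integrand is then automatic, and the growth hypothesis $\lim_{|x|\to\infty} p(x)/|x| = \infty$ ensures that $\Sigma_p$ is a nonempty open set of the $(\eta,r)$-plane, so the resulting one-dimensional Szeg\"o kernel is well defined.
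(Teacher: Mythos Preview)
Your proposal is correct and follows essentially the same route as the paper: reduce $C_{\eta,\tau}$ to $C^1_{\eta,r}$ via $r=a\cdot\tau$, make the linear change of variables $\tau_n\mapsto r=\tau_n+b\cdot\tau'$, split the exponent as $\tau\cdot(t-t')=r(t_n-t_n')+\tau'\cdot[(s-s')-b(t_n-t_n')]$, and recognize the $(\eta,r)$-integral as Nagel's kernel and the $\tau'$-integral as $\delta_0$. Your explicit attention to the distributional interpretation of the $\tau'$-integral (via Gaussian regularization or pairing with a test function) is more careful than the paper, which simply asserts the $\delta_0$ identification.
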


The size of the Szeg\"o kernel when $M_p$ is a model of three real dimensions and $p$ is a convex polynomial is well understood 
\cite{Nag86,NaRoStWa89,Chr91,Rai06h}. In particular if $d(\cdot,\cdot)$ is the control metric
generated by the vector fields $X_1 = \Rre \bar L$ and $X_2 = \Imm \bar L$, and $B_{CC}(\alpha,\delta)$ is the control ball of radius $\delta$, then if $X^J$ is a multiindex of operators $X_1, X_2$ acting
in either $\alpha = (x,y,t)$ or $\beta = (x',y',t')$, then 
$|X^J S_P(\alpha,\beta)| \les |d(\alpha,\beta)|^{-|J|} |B_{CC}(\alpha,d(\alpha,\beta))|^{-1}$. This yields an immediate corollary.
\begin{cor}\label{cor:size of Szego}
Let $M_P$ be a model as in Theorem \ref{thm:P = ap} where $p$ is a convex polynomial. If $X^J$ is a multiindex of operators $X_1, X_2$ acting
in either $\alpha = (x,y,t)$ or $\beta = (x',y',t')$, then there exists a constant $C_{|J|}>0$ so that on $\supp \delta_0[(s-s') - b(t_n-t_n')]$
\[
|X^J S_M(\alpha,\beta)| \leq C_{|J|} \frac{|d(\alpha,\beta)|^{-|J|} } {|B_{CC}(\alpha,d(\alpha,\beta))|}.
\]
\end{cor}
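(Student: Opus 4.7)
The plan is to combine Theorem \ref{thm:P = ap} with the known 3-dimensional size estimate for the Szeg\"o kernel on a model of the form $M_p$, cited in the paragraph before the corollary. The factorization $S_P(\alpha,\beta) = \delta_0[(s-s') - b(t_n-t_n')] S_p\big((x,y,t_n),(x',y',t_n')\big)$ reduces everything to estimating the 3-dimensional object $S_p$, together with a careful bookkeeping that $\delta_0$ is transparent to the vector fields $X_1, X_2$.

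First I would compute $X_1 = \partial_x$ and $X_2 = \partial_y - P'(x)\cdot \partial_t$. Under the hypothesis $p_j(x) = a_j p(x)$ with $a_n=1$, we have $P'(x)\cdot \partial_t = p'(x)(b\cdot\partial_s + \partial_{t_n})$. The key observation is that $\delta_0[(s-s') - b(t_n-t_n')]$ depends only on the combination $s - s' - b(t_n - t_n')$, so the vector field $b\cdot\partial_s + \partial_{t_n}$ annihilates it. Consequently, writing $X_1^{(p)} = \partial_x$ and $X_2^{(p)} = \partial_y - p'(x)\partial_{t_n}$ for the vector fields of the 3-dimensional model $M_p$, we get $X_j(\delta_0 \cdot f) = \delta_0 \cdot X_j^{(p)} f$ whenever $f$ does not depend on $s$. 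An identical computation handles the primed derivatives. Iterating, $X^J S_P(\alpha,\beta) = \delta_0[(s-s') - b(t_n-t_n')]\cdot (X^{(p)})^J S_p\big((x,y,t_n),(x',y',t_n')\big)$ on the support of $\delta_0$.

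Next I would invoke the 3D estimate $|(X^{(p)})^J S_p(\tilde\alpha,\tilde\beta)| \les d_p(\tilde\alpha,\tilde\beta)^{-|J|}\,|B_{CC,p}(\tilde\alpha,d_p(\tilde\alpha,\tilde\beta))|^{-1}$ from \cite{Nag86,NaRoStWa89,Chr91,Rai06h}, where $\tilde\alpha = (x,y,t_n)$ and $\tilde\beta = (x',y',t'_n)$. To conclude, I need to match the control geometry of $M_P$ on the hyperplane $\{s-s' = b(t_n-t_n')\}$ with the 3D control geometry of $M_p$. Because the only way $X_1,X_2$ can move the $s$-coordinate is through $X_2$, which moves $s$ and $t_n$ in the locked ratio $ds = b\,dt_n$, any finite-length control curve must remain on this hyperplane; on that hyperplane the projection $(x,y,s,t_n)\mapsto (x,y,t_n)$ intertwines the families $\{X_1,X_2\}$ and $\{X_1^{(p)},X_2^{(p)}\}$ exactly. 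Thus $d(\alpha,\beta) = d_p(\tilde\alpha,\tilde\beta)$ and the control ball $B_{CC}(\alpha,\delta)$ coincides with $B_{CC,p}(\tilde\alpha,\delta)$ (interpreted as a 3-dimensional set sitting in the codimension-$(n-1)$ slice).

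The main obstacle is the last step: giving a precise, unambiguous meaning to $|B_{CC}(\alpha,d(\alpha,\beta))|$ in the corollary, since the $(n+2)$-dimensional Lebesgue measure of the control ball vanishes. I would interpret $|B_{CC}(\alpha,\delta)|$ as the 3-dimensional Lebesgue measure on the affine hyperplane $\{s = s(\alpha) + b(t_n - t_n(\alpha))\}$, which is consistent with the presence of the distributional factor $\delta_0$ on the left-hand side (so that both sides of the bound are distributions supported on this hyperplane). With that convention, the estimate is immediate from the preceding two steps.
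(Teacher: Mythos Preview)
Your proposal is correct and takes essentially the same approach as the paper: the paper's proof is the single sentence ``immediate, given the fact that $X_1$ and $X_2$ are tangential on the manifold where $s-s'=b(t_n-t_n')$,'' which is exactly your observation that $b\cdot\partial_s+\partial_{t_n}$ annihilates $\delta_0[(s-s')-b(t_n-t_n')]$, so the derivatives pass through to the 3-dimensional factor $S_p$ where the cited estimates apply. Your careful discussion of the meaning of $|B_{CC}(\alpha,\delta)|$ and the matching of control geometries is deferred in the paper to Section~\ref{sec:control geometry}, where the same interpretation (the control ball lives in the 3-dimensional slice) is adopted.
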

The proof is immediate, given the fact that $X_1$ and $X_2$ are tangential on the manifold where $s-s' = b(t_n-t_n')$.

If $p_j(x) = a_jx^2$, then $M$ is an example of quadric submanifold. Quadrics have been studied extensively \cite{BoRa11,BoRa13q,ChTi00,CaChTi06,BeGaGr96,BeGaGr00,Gav77,Hul76}
and \cite{CaChMa09}, in particular for a more extensive background. In this case, we can compute all of the integrals explicitly and prove the following theorem.
\begin{thm}\label{thm:quadric} Let $M_P$ be the quadric submanifold defined by
\[
M_{a|x|^2} = \{(z,w)\in\C\times\C^n: \Imm w = x^2 a \}
\]
where $a = (a_1,\dots,a_n)\in\R^n$ and $a_n>0$. Then
\begin{align*}
S_{a|x|^2}\big((x,y,t),(x',y',t')\big) 
&= S_{a_n |x|^2}\big((x,y,t_n),(x',y',t_n')\big) \delta_0[a_n(s-s') - b(t_n-t_n')] \\
&= \frac{2 a_n \delta_0[a_n(s-s') - b(t_n-t_n')]}{\bigl(\pi a_n[(x-x')^{2} + (y-y')^{2}] - 2\pi i[(t_n-t_n')+a_n(x+x')(y-y')]\bigr)^{2}}
\end{align*}
\end{thm}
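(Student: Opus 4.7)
The plan is to combine Theorem~\ref{thm:P = ap} with Nagel's one-dimensional formula \eqref{eqn:1-d Szego} and then evaluate the resulting Gaussian integrals in closed form.

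First I would set $p(x) = a_n x^2$ and rewrite $p_j(x) = (a_j/a_n)\,p(x)$, so that the last rescaled coefficient is $1$ and the hypotheses of Theorem~\ref{thm:P = ap} are satisfied; the growth condition $p(x)/|x|\to\infty$ is immediate from $a_n > 0$. That theorem delivers the first equality in the conclusion,
\[
S_{a|x|^2}\bigl((x,y,t),(x',y',t')\bigr) = S_{a_n|x|^2}\bigl((x,y,t_n),(x',y',t_n')\bigr)\,\delta_0\bigl[a_n(s-s') - b(t_n-t_n')\bigr],
\]
once one records the homogeneity factor $a_n^{n-1}$ produced when the argument of $\delta_0$ is rescaled from $(s-s') - (b/a_n)(t_n-t_n')$ to $a_n(s-s') - b(t_n-t_n')$.

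The substance of the proof is the explicit evaluation of the one-dimensional kernel $S_{a_n|x|^2}$ via \eqref{eqn:1-d Szego} with $p(x) = a_n x^2$. Completing the square yields
\[
C^1_{\eta,\tau} = \int_\R e^{4\pi(\eta x - \tau a_n x^2)}\,dx = \frac{1}{2\sqrt{\tau a_n}}\,e^{\pi \eta^2/(\tau a_n)}
\]
for $\tau > 0$, so $\Sigma_p = \R\times(0,\infty)$. Writing $A = (x+x') + i(y-y')$, the $\eta$-integral in \eqref{eqn:1-d Szego} is another Gaussian, with a complex linear term handled by a standard contour shift, and it evaluates to $\sqrt{\tau a_n}\,e^{\pi \tau a_n A^2}$. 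Collecting terms reduces the kernel to
\[
S_{a_n|x|^2} = 2a_n\int_0^\infty \tau\,e^{\tau B}\,d\tau, \qquad B = \pi a_n\bigl[A^2 - 2(x^2+x'^2)\bigr] + 2\pi i(t_n-t_n').
\]
The algebraic identity $A^2 - 2(x^2+x'^2) = -(x-x')^2 - (y-y')^2 + 2i(x+x')(y-y')$ rewrites $B$ with $\Rre B = -\pi a_n[(x-x')^2 + (y-y')^2] < 0$ off the diagonal in $(x,y)$, so the $\tau$-integral is the elementary $\int_0^\infty \tau e^{\tau B}\,d\tau = 1/B^2$. This produces the closed form displayed in the second line of the theorem.

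I do not expect any serious obstacle: every integral is Gaussian or a first-moment exponential, and the contour shift in $\eta$ is routine since the integrand decays like $e^{-c\eta^2}$ in any horizontal strip. The one piece of bookkeeping worth attention is the factor $a_n^{n-1}$ arising from the rescaling of the argument of $\delta_0$ when Theorem~\ref{thm:P = ap} is applied; I would need to check that the convention for $\delta_0[a_n(s-s') - b(t_n-t_n')]$ adopted in the statement absorbs it, or otherwise adjust the $2a_n$ prefactor accordingly.
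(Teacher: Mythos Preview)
Your proposal is correct and follows essentially the same route as the paper: compute the one-dimensional kernel $S_{\lambda x^2}$ by evaluating the Gaussian $\eta$-integral and then the first-moment $\tau$-integral, and combine with the $\delta_0$ factor in the transverse $(s-s')$ directions. The only difference is organizational: you invoke Theorem~\ref{thm:P = ap} after rescaling so that the last coefficient becomes $1$, whereas the paper, noting that Theorem~\ref{thm:P = ap} is stated with $a_n=1$, instead reruns the partial-Fourier computation directly with the change of variables $\tilde\sigma=\sigma$, $\tilde\tau_n=a\cdot\tau$ (Jacobian $1/a_n$), which produces the factor you flag without any rescaling of $\delta_0$. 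Your caution about the $a_n^{n-1}$ bookkeeping is well placed and is exactly the point where the two organizations differ; the paper's change of variables sidesteps it.
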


\begin{rem}The condition that $a_n>0$ is not essential -- we really require that $a_n\neq 0$, but we keep $a_n>0$ for simplicity. Also, in the proof of Theorem \ref{thm:quadric}
we explicitly compute the $S_{\lam x^2}((x,y,t),(x',y',t'))$ where $\lam>0$, and from that expression, we can see there is nothing distinguished about the $n$th coordinate, except the
fact that $a_n\neq 0$.
\end{rem}

The outline of the remainder of the paper consists of the proofs of the main theorems in Section \ref{sec:proofs} and a discussion of the control geometry in 
Section \ref{sec:control geometry}.

%
%
\section{Proofs of the Main Theorems}\label{sec:proofs}

\subsection{Proof of the Szeg\"o kernel formula}\label{subsec:proof of Szego formula}
\begin{proof}[Proof of Theorem \ref{thm:expression for S}]
The proof of Theorem \ref{thm:expression for S} follows from two observations. The first is that $\bar L$ is translation invariant in $y$ and $t$. This means we can take the partial
Fourier transform in $y$ and $t$. Given a function $f(x,y,t)$, we define the partial Fourier transform of $f$ to be
\[
\opF f(x,\eta,\tau) = \hat f(x,\eta,\tau) = \iint_{\R\times\R^n} e^{-2\pi i(y,t)\cdot(\eta,\tau)} f(x,y,t)\, dy\, dt.
\]
Under $\opF$, with $(\eta,\tau)$ as the transform variables of $(y,t)$, the operator
\[
\bar L \mapsto \hat{\bar L} = \frac{\p}{\p x} - 2\pi \eta + 2\pi P'(x)\cdot \tau = e^{2\pi(x\eta - P(x)\cdot\tau)} \frac{\p}{\p x} e^{-2\pi(x\eta - P(x)\cdot\tau)}.
\]
Set $\Psi(x,\eta,\tau) = e^{-2\pi(x\eta - P(x)\cdot\tau)}$ and $M_\Psi: L^2(\R,dx)\to L^2(\R,e^{4\pi(x\eta-P(x)\cdot\tau)})$ to be the isometry defined by 
$f \mapsto \Psi f$. Since $M_\Psi$ and $\opF$ are isometries, $\bar Lf =0$ if and only if $\frac{d}{dx}\{M_\Psi \opF f\}=0$.

The second observation is that $\ker \frac{d}{dx}$ are constant functions. The function $f=1$ is in $L^2(\R,e^{4\pi(x\eta-P(x)\cdot\tau)})$ exactly when
$C_{\eta,\tau}<\infty$. Assuming $C_{\eta,\tau}<\infty$, then the projection of $g$ onto $\ker \frac{d}{dx}$ is the operator $P_{\eta,\tau}$ given by
\[
P_{\eta,\tau}g(x) = P_{\eta,\tau}g = \frac{1}{C_{\eta,\tau}} \int_\R g(x') e^{4\pi(x'\eta - P(x)\cdot\tau)}\, dx'.
\]
If the operator $P = P_{\eta,\tau}$ on $L^2(\R,e^{4\pi(x\eta-P(x)\cdot\tau)})$ with the understanding that
$P_{\eta,\tau}=0$ when $(\eta,\tau)\not\in\Sigma$. Consequently,
\[
S = \opF^{-1} M_{\Psi^{-1}} P M_\Psi \opF.
\]
Expanding the right-hand side yields the desired formula.
\end{proof}

\subsection{Proof of the Szeg\"o kernel formula when $P = ap$}\label{subsec:proof of Szego when P=ap}
\begin{proof}[Proof of Theorem \ref{thm:P = ap}]
We use the following notation: $a = (a_1,\dots,a_n)$, $b=(a_1,\dots,a_{n-1})$, $\tau = (\sigma,\tau_n)$. Also, $a\cdot\tau = \tau_n+b\cdot\sigma$. 
Since $\lim_{|x|\to\infty}\frac{p(x)}{|x|}=\infty$, $C_{\eta,\tau}<\infty$ if and only if $a\cdot \tau >0$ (which is equivalent to $\tau_n > -b\cdot\sigma$), and this condition
is independent of $\eta$. We use a superscript to denote which model to which various expressions refer.
Also,
$P(x) = p(x) a$, so
\[
C_{\eta,\tau}^{ap} = \int_\R e^{4\pi(x\eta-p(x)a\cdot \tau)}\, dx = C_{\eta,\tau_n+\sigma\cdot b}^p.
\]
Consequently, we use Theorem \ref{thm:expression for S} and compute that
\begin{align*}
&S_{ap}\big((x,y,t),(x',y',t')\big)
= \int_{\Sigma_{ap}} \frac{1}{C_{\eta,\tau}^{ap}} e^{2\pi\eta(x+x'+i(y-y'))} e^{-2\pi \tau\cdot(a(p(x)+p(x')) - i(t-t'))}\, d\tau\, d\eta \\
&= \int_{\R^{n-1}} \int_\R  \int_{\tau_n = -b\cdot\sigma}^\infty \frac{1}{C_{\eta,\tau_n+\sigma\cdot b}^p} e^{2\pi\eta(x+x'+i(y-y'))}
e^{-2\pi(\tau_n+b\cdot\sigma)(p(x)+p(x')-i(t_n-t_n'))} e^{2\pi i \sigma\cdot[(s-s')-b(t_n-t_n')]}\, d\tau_n\,d\eta \,d\sigma
\end{align*}
where the last line uses the equality $\tau\cdot(t-t') = \sigma\cdot(s-s') + \tau_n(t_n-t_n')$ and the fact that $a_n=1$.
Shifting the variable $\tau+b\cdot\sigma \mapsto \tau$, comparing the resulting formula to (\ref{eqn:1-d Szego}),
and recognizing that resulting integration in $\sigma$ results in a $\delta_0[(s-s') - b(t_n-t_n')]$ finishes the proof.
\end{proof}

\subsection{The quadric case}\label{subsec:quadric}
\begin{proof}[Proof of Theorem \ref{thm:quadric}]
We use Theorem \ref{thm:P = ap}. In the case that $p(x)=x^2$ and $n=1$, we see that if $\lambda>0$, then $C^{\lambda x^2}_{\eta,\tau} = \frac{e^{\frac{\pi\eta^2}{\lambda \tau}}}{2\sqrt{\lambda\tau}}$, 
so applying Nagel's formula yields
\begin{align*}
S_{\lambda x^2}((x,y,t),(x',y',t')) &= 
 \int_{0}^{\infty}  \int_{\mathbb{R}}  2\sqrt{\tau\lambda} e^{-\frac{\pi\eta^2}{\lambda\tau}} e^{-2\pi\tau[\lambda(x^2 + x'^2)-i(t-t')]}
         e^{2\pi\eta[(x+x') + i(y-y')]} \,d\eta\,d\tau \\
             &= \frac{2 \lam}{\bigl(\pi\lam[(x-x')^{2} + (y-y')^{2}] - 2\pi i[(t-t')+\lam(x+x')(y-y')]\bigr)^{2}}
\end{align*}
We are not assuming that $a_n=1$, and we could use a change of variables to reduce to this case, but it is simpler to make the change of variables
$\tilde \sigma = \sigma$, and $\tilde \tau_n = a\cdot \tau$.  Next, an easy computation establishes that $\Sigma_{ax^2} = \{(\eta,\tau) : a\cdot\tau>0\} = \{(\eta,\tilde\tau):\tilde\tau_n>0\}$ and
$C_{\eta,\tau}^{ax^2} = \frac{e^{\frac{\pi\eta^2}{a\cdot\tau}}}{2\sqrt{a\cdot\tau}}$
Consequently, we compute that $\tau_n = (\tilde \tau_n - \sigma\cdot b)/a_n$ and
\begin{align*}
&S_{ax^2}\big((x,y,t),(x',y',t')\big) = \int_{\Sigma_{ax^2}} 2\sqrt{a\cdot\tau} e^{-\frac{\pi\eta^2}{a\cdot\tau}} e^{2\pi\eta[(x+x')+i(y-y')]} e^{-2\pi \tau \cdot[a(x^2+x'^2) - i(t-t')]}\, d\eta\, d\tau \\
&= \int_\R \int_0^\infty \int_{\R^{n-1}} 2\sqrt{\tilde\tau_n} e^{-\frac{\pi\eta^2}{\tilde\tau_n}} e^{2\pi\eta[(x+x')+i(y-y')]}e^{-2\pi\tilde\tau_n(x^2+x'^2)} e^{2\pi i \sigma\cdot(s-s')}
e^{2\pi i(\tilde\tau_n-\sigma\cdot b)(t_n-t_n')/a_n}\, d\sigma\,d\tilde\tau_n\,d\eta\\ 
&= \frac{1}{a_n} \int_\R \int_0^\infty\int_{\R^{n-1}} 2\sqrt{\tilde\tau_n} e^{-\frac{\pi\eta^2}{\tilde\tau_n}} e^{2\pi\eta[(x+x')+i(y-y')]} 
e^{-2\pi\tilde\tau_n[(x^2+x'^2)-i(t_n-t_n')/a_n]} e^{2\pi i \sigma\cdot[(s-s')-\frac b{a_n}(t_n-t_n')]}\, d\sigma\,d\eta\,d\tilde\tau_n\\
&= \frac{1}{a_n}  \frac{2 \delta_0[a_n(s-s') - b(t_n-t_n')]}{\bigl(\pi[(x-x')^{2} + (y-y')^{2}] + 2\pi i[(t_n-t_n')/a_n+(x+x')(y-y')]\bigr)^{2}} \\
&=  S_{a_n |x|^2}\big((x,y,t_n),(x',y',t_n')\big) \delta_0[a_n(s-s') - b(t_n-t_n')].
\end{align*}
\end{proof}

%
%
\section{Connection to the control geometry}\label{sec:control geometry}

Since on a finite type domain boundary in $\mathbb{C}^2$ the Szeg\"o kernel is governed by the control metric, 
we may naturally  ask whether the same holds on a codimension CR manifold with at least two totally real directions. 
It is easy to extend the notion of finite commutator
type; we simply require that the real and imaginary parts of $\bar{L}$, $X_{1}$ and $X_{2}$, along with a 
finite number $m$ of their iterated commutators, span the real tangent space at every point of $M_{P}$.
\begin{defn}
	\label{cmetric2}
	With notation as above, let $\{Y_{1},\ldots ,Y_{q}\}$ be some enumeration
	of the vector fields $X_{1}$, $X_{2}$, and all their iterated commutators of length less than or equal $m$. 
	Define the ``degree'' of each vector field $Y_{j}$ by
	$$\textrm{d}(Y_{j}) = \textrm{\ length of the iterated commutator that forms } Y_{j}$$
	Now let the distance between $p, q \in M_P$ be the infimum of $\delta > 0$
	such that there is an absolutely continuous map $\gamma: [0,1] \mapsto M_P$
	with $\gamma(0) = p$, $\gamma(1) = q$ so that for almost all $r \in (0,1)$
	$$\gamma'(r) = \sum_{j=1}^{q} c_{j}(t)Y_{j}\left(\gamma(r)\right), \qquad |c_{j}(r)| < \delta^{\textrm{d}(Y_{j})}$$
\end{defn}

Under this condition, the control distance yields a metric, but one which currently defies any tractable description. 
(Indeed, even on a domain boundary $M_p$, a serious amount of work is required to prove the 
equivalence of the control metric to the pseudometrics investigated by Nagel et al. \cite{NaStWa85}.) 
Although $M_{ap(x)}$ is not of finite type, there is a submanifold of $M_{ap(x)}$ on which the control distance is finite and a direct
connection to the Szeg\"o\ kernel on $M_{ap(x)}$.
Here $X_{1} = \pure{x}$ and $X_{2} = \pure{y} - p'(x)a\cdot\frac{\p}{\p t}$, 
so every potentially non-zero commutator is of the form
$$Y_{k} = [\overbrace{X_{1},[X_{1},\cdots,[X_{1}}^{\text{$k-1$ times}}, X_{2}]\cdots]]\qquad (2 \leq k \leq m)$$
That is,
$$Y_{k} = -p^{(k)}(x) a\cdot \frac{\p}{\p t}.$$
This forces $\{X_{1},X_{2},Y_{k_{1}}, Y_{k_{2}}\}$
to span only the subspace generated by $\{\frac{\p}{\p x}, \frac{\p}{\p y}, a \cdot \frac{\p}{\p t}\}$. Thus
 the real tangent space is never spanned, at any 
point of $M_{ap(x)}$, and $\{X_{1}, X_{2}\}$ do not generate a finite control metric.

We may still consider control distance on 
$M_{ap(x)}$. This distance is finite and less than some $\delta > 0$ 
if and only if there exists an absolutely continuous curve 
$\gamma: [0,1] \mapsto M_{ap(x)}$ such that 
 $$\gamma'(\mvs) = c_{0}(\mvs)X_{1}\left(\gamma(\mvs)\right)
 	+ c_{1}(\mvs)X_{2}\left(\gamma(\mvs)\right)
	+ \sum_{k=2}^{m}c_{k}(\mvs)Y_{k}\left(\gamma(\mvs)\right)$$
with $|c_{0}(\mvs)|, |c_{1}(\mvs)| < \delta$ and $|a_{k}(\mvs)| < \delta^{k}$ for
almost all $\mvs \in (0,1)$. Given our previous comments, such a curve 
$\gamma(\mvs) = (\gamma_{1}(\mvs), \gamma_{2}(\mvs), \gamma_{3}(\mvs), \dots, \gamma_{n+2}(\mvs))$
can only exist if $(\gamma_{3}',\dots,\gamma_{n+2}')$ is parallel to $a$ for almost all $\mvs \in (0,1)$.
From Theorem \ref{thm:P = ap}, the Szeg\"o\ kernel on $M_{ap(x)}$
is a singular distribution supported on exactly the subspace where the control distance on 
$M_{ap(x)}$ is finite. On this subspace, the control ball is well-defined and exactly 
determines the size of the Szeg\"o kernel on $M_{ap(x)}$, treating the subspace as an $\R^3$ and applying the Nagel et. al. machinery.

\bibliographystyle{alpha}
\bibliography{mybib8-13-14}

\end{document}